\newtheorem{definition}{Definition}
\newtheorem{proof}{Proof}
\newtheorem{theorem}{Theorem}
\newtheorem{exercise}{Exercise}
\newtheorem{example}{Example}
\newtheorem{remark}{Remark}
\DeclareMathOperator{\id}{\mathrm{id}}
\newcommand{\e}{\mathrm{e}}
\newcommand{\ii}{\mathrm{i}}
\newcommand{\Iff}{\mbox{if and only if,}\xspace}
\newcommand{\ie}{\mbox{i.\,e.}\xspace}
\newcommand{\resp}{\mbox{respectively}\xspace}
\newcommand{\wlg}{\mbox{without loss of generality}\xspace}
\renewcommand{\d}{\mathrm{d}}
\newcommand{\dx}{\,\mathrm{d}x}
\newcommand{\dt}{\,\mathrm{d}t}
\newcommand{\N}{\mathbb{N}}
\newcommand{\Z}{\mathbb{Z}}
\newcommand{\R}{\mathbb{R}}
\newcommand{\vp}{\varphi}
  \title{More convex functions by Artin`s method}
  \author{Martin Himmel}
  \date{\today}
\begin{document}
  \maketitle
\begin{abstract}
We use Artin`s paper on the Gamma function to find more $\log$ convex functions 
that interpolate a sequence of natural numbers given by a recursion equation.
\end{abstract}

\section{Introduction}
Let us recall some ideas and results of from Artin`s famous paper on the Gamma function.
Therefore, let $a$ and $b$ be real numbers with $a<b$ and $f: (a,b) \to \R$ a function.
For any $x_1, x_2 \in (a,b)$, $x_1 \neq x_2$, we define the difference quotient 
\begin{equation}
\vp(x_1, x_2):=\frac{f(x_1)-f(x_2)}{x_1-x_2}=\vp(x_2,x_1).
\end{equation}
Then, for pairwise different $x_1, x_2, x_3 \in (a,b)$, the iterated difference quotient is defined by
\begin{equation}
\Phi(x_1, x_2, x_3):=\frac{\vp(x_1, x_3)-\vp(x_2, x_3)}{x_1-x_3}
\end{equation}

\begin{exercise}
Show that $\Phi(x_1, x_2, x_3)$ does not change sign under permutation of the arguments $x_1, x_2$ and $x_3$.
\end{exercise}

\begin{definition}[Convexity]
The function $f: (a,b) \to \R$ is called convex
if, for any fixed $x_3 \in (a,b)$, the difference quotient
$\vp(x_1, x_3)$ is a monotone increasing function of $x_1$,
\ie whenever $x_1, x_2 \in (a,b)$ with $x_1<x_2$ we have
\begin{equation}
\vp(x_1, x_3)\leq \vp(x_2, x_3).
\end{equation} 
\end{definition}
\begin{exercise}
Show that $f: (a,b) \to \R$ is convex, if and only if,
the iterated difference quotient $\Phi$ satisfies $\Phi(x_1, x_2, x_3) \geq 0$.
\end{exercise}

\begin{exercise}
Show that 
the sum $f+g$ of convex functions $f: (a,b) \to \R$ and $g: (a,b) \to \R$ is convex and 
the limit $\lim{f_n}$  of a sequence of convex functions $f_n: (a,b) \to \R$ is convex.
\end{exercise}
Much is known about convex functions \cite{SIMONConvexity}.
The following results are taken from Artin`s paper on the Gamma function \cite{ArtinGamma}.
\begin{theorem}[Rolle]
Let $f: (a,b) \to \R$ be continuous whose one-sided derivatives $f(x+0)$ and $f(x-0)$ exists for $x\in (a,b)$.
Moreover, assume that $f(a)=f(b)$.
Then, there is a $\xi \in (a,b)$ such that
\begin{equation}
f^\prime(\xi+0) \geq 0 \text{ and } f^\prime(\xi-0)\leq 0
\end{equation}
\end{theorem}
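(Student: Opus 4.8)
\medskip
\noindent\textbf{Sketch of proof.}
The plan is to adapt the classical proof of Rolle's theorem, the only change being that $f$ is assumed to have one-sided derivatives rather than a two-sided one; I write $f'(\xi+0)$ and $f'(\xi-0)$ for the right-hand and left-hand derivatives at $\xi$. First I would make the hypothesis $f(a)=f(b)$ meaningful by regarding $f$ as a continuous function on the closed interval $[a,b]$ (still possessing one-sided derivatives throughout the open interval $(a,b)$), and I would dispose of the trivial case at once: if $f$ is constant, then $f'(\xi+0)=f'(\xi-0)=0$ for every $\xi\in(a,b)$ and any such $\xi$ works. So assume henceforth that $f$ is not constant.

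Next I would invoke the Weierstrass extreme value theorem: being continuous on the compact interval $[a,b]$, the function $f$ attains both a minimum and a maximum there. Since $f$ is not constant its maximum value strictly exceeds its minimum value, while $f(a)=f(b)$; hence at least one of these two extreme values differs from the common boundary value, and the corresponding extremum is therefore attained at an interior point. Consider the case where the minimum is attained at some $\xi\in(a,b)$ (the case of an interior maximum is symmetric and gives the mirror-image inequalities). Then $f(\xi\pm h)\ge f(\xi)$ for all sufficiently small $h>0$, so the difference quotients obey
\[
\frac{f(\xi+h)-f(\xi)}{h}\ \ge\ 0
\qquad\text{and}\qquad
\frac{f(\xi-h)-f(\xi)}{-h}\ \le\ 0 .
\]
Because the one-sided derivatives are assumed to exist, I may let $h\downarrow 0$ in each inequality; a limit of nonnegative quantities is nonnegative and a limit of nonpositive quantities is nonpositive, so the first inequality gives $f'(\xi+0)\ge 0$ and the second gives $f'(\xi-0)\le 0$, as claimed.

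The delicate point, I expect, is the dichotomy ``interior extremum versus boundary extremum'': it is exactly here that $f(a)=f(b)$ is used, and one must check that whenever the argument above does not directly produce an interior minimum one is either in the symmetric interior-maximum situation or $f$ is constant, so that some $\xi$ of the required type always exists. By comparison, the passage from the sign of the finite difference quotients to the sign of the one-sided derivatives is routine, since non-strict inequalities survive the limit once one knows the one-sided limits exist.
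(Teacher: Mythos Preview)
The paper does not actually prove this theorem; it is merely quoted as one of the results ``taken from Artin's paper on the Gamma function,'' so there is no in-paper argument to compare against. Your approach --- Weierstrass extreme value theorem, locate an interior extremum, read off the signs of the one-sided difference quotients and pass to the limit --- is exactly the classical one and is the right idea.

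There is, however, a genuine gap in the dichotomy you yourself flag as ``the delicate point.'' You correctly observe that an interior \emph{minimum} gives $f'(\xi+0)\ge 0$ and $f'(\xi-0)\le 0$, and that an interior \emph{maximum} gives the mirror-image inequalities $f'(\xi+0)\le 0$ and $f'(\xi-0)\ge 0$. But you then assert that ``some $\xi$ of the required type always exists,'' and this does not follow. Take $f(x)=-\lvert x-\tfrac12\rvert$ on $[0,1]$: here $f(0)=f(1)=-\tfrac12$, the only interior extremum is the maximum at $\xi=\tfrac12$, and at that point $f'(\tfrac12+0)=-1<0$ while $f'(\tfrac12-0)=1>0$; no point of $(0,1)$ satisfies the inequalities as literally stated. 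In other words, your argument proves the correct version of the result --- there exists $\xi\in(a,b)$ with one one-sided derivative $\ge 0$ and the other $\le 0$ --- but not the particular orientation printed in the statement, which as written is too strong. You should either note that the statement needs this symmetric weakening, or show separately that an interior minimum can always be found (which, as the example shows, is false in general).
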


\begin{theorem}[Mean Value]
Let $f: [a,b] \to \R$ be continuous with one-sided derivatives $f(x+0)$ and $f(x-0)$ exists for any $x\in (a,b)$.
Then there exists  $\xi \in (a,b)$ such that
\begin{equation}
\frac{f(b)-f(a)}{b-a} \in \left(f^\prime(\xi-0), f^\prime(\xi+0)\right)
\end{equation}
\end{theorem}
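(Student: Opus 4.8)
The plan is to reduce the statement to the generalized Rolle theorem proved above by subtracting off the secant line. Concretely, I would introduce the auxiliary function
$$g(x) := f(x) - \frac{f(b)-f(a)}{b-a}\,(x-a), \qquad x \in [a,b].$$
Since $f$ is continuous on $[a,b]$ and the subtracted term is affine, $g$ is continuous on $[a,b]$; and since an affine term contributes the same constant to every difference quotient, the one-sided derivatives of $g$ exist at each $x \in (a,b)$ and satisfy
$$g^\prime(x\pm 0) = f^\prime(x\pm 0) - \frac{f(b)-f(a)}{b-a}.$$

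Next I would verify the hypothesis of the Rolle theorem for $g$. We have $g(a) = f(a)$ and $g(b) = f(b) - \bigl(f(b)-f(a)\bigr) = f(a)$, so $g(a)=g(b)$. Applying the Rolle theorem to $g$ produces a point $\xi \in (a,b)$ with $g^\prime(\xi+0) \ge 0$ and $g^\prime(\xi-0) \le 0$. Substituting the derivative formula above turns this into
$$f^\prime(\xi-0) \le \frac{f(b)-f(a)}{b-a} \le f^\prime(\xi+0),$$
which is precisely the claim that the secant slope lies between the one-sided derivatives at $\xi$.

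I do not expect a genuine obstacle: the whole content is the right choice of auxiliary function together with the routine check that its boundary values agree and that its one-sided derivatives are those of $f$ shifted by the constant secant slope. The one point worth flagging is the open versus closed interval in the statement. The argument as sketched delivers the closed containment $\frac{f(b)-f(a)}{b-a} \in [f^\prime(\xi-0), f^\prime(\xi+0)]$, and a strictly open version cannot hold verbatim — for an affine $f$ the left and right derivatives coincide with the secant slope at every point, so the open interval would be empty. Hence the parentheses in the statement should be read as ``the value lies between the one-sided derivatives'', and the closed-interval conclusion above is exactly what the generalized Rolle theorem yields.
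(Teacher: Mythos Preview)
Your reduction to the generalized Rolle theorem via the auxiliary function $g(x)=f(x)-\dfrac{f(b)-f(a)}{b-a}(x-a)$ is exactly the intended argument; the paper does not supply its own proof here but simply quotes the result from Artin, and Artin's proof proceeds precisely as you outline. Your remark about the interval notation is also well taken: only the closed containment $\dfrac{f(b)-f(a)}{b-a}\in\bigl[f'(\xi-0),f'(\xi+0)\bigr]$ is provable, as the affine example shows, so the parentheses in the statement must be read loosely.
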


\begin{theorem}[Characterization of Convexity]
$f: (a,b) \to \R$ is a convex function if, and only if,
$f$ has monotonically increasing one-sided derivatives.
If, in addition, $f$ is twice differentiable,
convexity of $f$ is equivalent to $f^{\prime \prime} \geq 0$ for $x\in (a,b)$.
\end{theorem}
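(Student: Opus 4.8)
I would establish the two asserted equivalences separately, after first recasting convexity in a handier form. Using the elementary identity $\vp(u,w)=\frac{(w-v)\,\vp(v,w)+(v-u)\,\vp(u,v)}{w-u}$ for $u<v<w$, the quantity $\vp(u,w)$ is a weighted average of $\vp(u,v)$ and $\vp(v,w)$ and hence lies between them; consequently the three inequalities $\vp(u,v)\le\vp(u,w)$, $\vp(u,v)\le\vp(v,w)$, $\vp(u,w)\le\vp(v,w)$ are all equivalent, and a short case distinction according to the position of $x_3$ relative to $x_1,x_2$ shows that $f$ is convex in the sense of the Definition if and only if $\vp(u,v)\le\vp(v,w)$ for all $u<v<w$ in $(a,b)$. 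I would work throughout with this criterion.

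\emph{Convexity $\Rightarrow$ monotone one-sided derivatives.} Fix $x_0\in(a,b)$. By the Definition, $x\mapsto\vp(x,x_0)$ is monotone increasing on $(a,b)\setminus\{x_0\}$, so $\vp(x,x_0)$ decreases as $x\downarrow x_0$ and stays above $\vp(x',x_0)$ for any fixed $x'<x_0$; hence $f^\prime(x_0+0)=\inf_{x>x_0}\vp(x,x_0)$ exists and is finite, and symmetrically $f^\prime(x_0-0)=\sup_{x<x_0}\vp(x,x_0)$ exists. Since every quotient whose second argument lies left of $x_0$ is at most every quotient whose second argument lies right of $x_0$, we get $f^\prime(x_0-0)\le f^\prime(x_0+0)$; and for $x_0<y_0$ the two extremal characterizations give $f^\prime(x_0+0)\le\vp(x_0,y_0)\le f^\prime(y_0-0)$. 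Chaining these yields $f^\prime(x_0-0)\le f^\prime(x_0+0)\le f^\prime(y_0-0)\le f^\prime(y_0+0)$, the asserted monotonicity.

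\emph{Monotone one-sided derivatives $\Rightarrow$ convexity.} Now assume the one-sided derivatives exist and are monotone increasing, so in particular $f^\prime(x-0)\le f^\prime(x+0)$ for each $x$ and $f^\prime(x+0)\le f^\prime(y-0)$ for $x<y$; note that finiteness of the one-sided derivatives makes $f$ continuous, so the Mean Value Theorem applies on any closed subinterval. Given $u<v<w$, I would apply it on $[u,v]$ to obtain $\xi\in(u,v)$ with $\vp(u,v)\le f^\prime(\xi+0)$, and on $[v,w]$ to obtain $\eta\in(v,w)$ with $f^\prime(\eta-0)\le\vp(v,w)$; since $\xi<v<\eta$, monotonicity gives $f^\prime(\xi+0)\le f^\prime(v-0)\le f^\prime(v+0)\le f^\prime(\eta-0)$ and therefore $\vp(u,v)\le\vp(v,w)$, so $f$ is convex by the criterion above. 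If $f$ is moreover twice differentiable, then $f^\prime(x-0)=f^\prime(x+0)=f^\prime(x)$ and the criterion reduces to the statement that $f^\prime$ is monotone increasing, which by the ordinary Mean Value Theorem applied to $f^\prime$ (and, conversely, by letting $h\to0$ in $\frac{f^\prime(x+h)-f^\prime(x)}{h}$) is equivalent to $f^{\prime\prime}\ge0$ on $(a,b)$.

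The weighted-average identity and the inequality chains are routine; the steps I expect to demand the most care are making the reformulation of convexity watertight — especially the case analysis on the location of $x_3$ — and, in each application of the Mean Value Theorem, reading off the correct one of the two one-sided derivatives at the intermediate point, which is exactly where the hypothesis $f^\prime(x-0)\le f^\prime(x+0)$ enters.
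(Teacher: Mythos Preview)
The paper does not actually supply a proof of this theorem: it is one of the results simply quoted from Artin's monograph, so there is no in-text argument to compare against. Your proposal is correct and is in fact the standard Artin-style proof, built precisely on the one-sided Mean Value Theorem the paper states just above. The reformulation via the weighted-average identity, the extraction of $f'(x_0\pm0)$ as $\sup$/$\inf$ of difference quotients, and the chaining $f'(x_0-0)\le f'(x_0+0)\le f'(y_0-0)\le f'(y_0+0)$ are all sound; the converse direction correctly reads off the needed one-sided inequality from the Mean Value conclusion and uses the monotonicity hypothesis to bridge the two intermediate points. One cosmetic remark: the paper's Mean Value Theorem is written with an \emph{open} interval $(f'(\xi-0),f'(\xi+0))$, which is vacuous when $f$ is differentiable at $\xi$; you implicitly (and rightly) read it as the closed interval, and you only ever use the consequence $f'(\xi-0)\le \vp \le f'(\xi+0)$, so this does not affect your argument.
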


\begin{definition}[Weak Convexity]
$f:(a,b)\to \R$ is called weakly convex if 
\begin{equation}
f(x_1+x_2) \leq \frac{1}{2}(f(x_1)+f(x_2))
\label{eq:weakConvexity}
\end{equation}
holds for all $x_1, x_2 \in (a,b)$.
\end{definition}

Any weakly convex function is convex
and the converse holds for continuous functions.
\begin{theorem}[Weakly Convex plus Continuous implies Convex]
$f: (a,b) \to \R$ is a convex function if, and only if,
$f$ is weakly convex and continuous.
\end{theorem}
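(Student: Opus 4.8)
The plan is to prove the two implications separately; the forward one is short, and the converse (weak convexity together with continuity implies convexity) carries the real content. For the forward direction, suppose $f$ is convex. By the Characterization of Convexity, $f$ has monotonically increasing one-sided derivatives, so in particular $f^\prime(x-0)$ and $f^\prime(x+0)$ are finite at every $x\in(a,b)$; but the mere existence of a finite one-sided derivative at a point forces $f$ to be continuous from that side there (write $f(x\pm h)-f(x)=\pm h\,(f^\prime(x\pm0)+o(1))\to 0$), so $f$ is continuous on $(a,b)$. Weak (midpoint) convexity is then a special case of the defining monotonicity of $x_1\mapsto\vp(x_1,x_3)$: given $x_1<x_2$ in $(a,b)$, put $m:=(x_1+x_2)/2$ and apply $\vp(x_1,m)\le\vp(x_2,m)$; since $x_1-m=-(x_2-m)<0$, clearing the two oppositely signed denominators turns this into $f(m)-f(x_1)\le f(x_2)-f(m)$, i.e. $2f(m)\le f(x_1)+f(x_2)$, which is \eqref{eq:weakConvexity}.

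For the converse, suppose $f$ is weakly convex and continuous. First I would iterate \eqref{eq:weakConvexity} to an averaged inequality over $2^n$ points: for every $n\in\N$ and all $y_1,\dots,y_{2^n}\in(a,b)$,
\[
f\!\left(\frac{y_1+\cdots+y_{2^n}}{2^n}\right)\;\le\;\frac{f(y_1)+\cdots+f(y_{2^n})}{2^n},
\]
proved by induction on $n$, the step from $n$ to $n+1$ being one application of \eqref{eq:weakConvexity} to the averages of the first and second halves. Feeding in $k$ copies of a point $u\in(a,b)$ and $2^n-k$ copies of a point $v\in(a,b)$ yields
\[
f\!\left(\lambda u+(1-\lambda)v\right)\;\le\;\lambda f(u)+(1-\lambda)f(v)
\]
for every dyadic rational $\lambda=k/2^n\in[0,1]$. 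Fixing $u,v$ and an arbitrary $\lambda\in(0,1)$, I would choose dyadic $\lambda_j\to\lambda$ and let $j\to\infty$; continuity of $f$ makes $f(\lambda_j u+(1-\lambda_j)v)\to f(\lambda u+(1-\lambda)v)$, so the chord inequality $f(\lambda u+(1-\lambda)v)\le\lambda f(u)+(1-\lambda)f(v)$ holds for all $\lambda\in[0,1]$ and all $u,v\in(a,b)$.

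It then remains to read the paper's definition of convexity off the chord inequality. Fix $x_3\in(a,b)$ and $x_1<x_2$ in $(a,b)$, all pairwise distinct; exactly one of the three lies strictly between the other two, and that one can be written as a convex combination $\lambda\cdot(\text{smaller})+(1-\lambda)\cdot(\text{larger})$ with $\lambda$ the evident ratio of interval lengths. Substituting the chord inequality and clearing the (now fixed-sign) denominators yields in each of the three positional cases exactly $\vp(x_1,x_3)\le\vp(x_2,x_3)$, the three-slopes computation being the same difference-quotient bookkeeping as in Exercises~1 and~2; the symmetry $\vp(x,y)=\vp(y,x)$ absorbs the ordering of the arguments. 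Hence $x_1\mapsto\vp(x_1,x_3)$ is monotone increasing and $f$ is convex.

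I expect the converse to be where care is needed, and specifically the single place continuity is genuinely used: the passage from dyadic $\lambda$ to all $\lambda$. Midpoint convexity by itself does not imply convexity — a discontinuous additive function built from a Hamel basis is midpoint convex but nowhere convex — so this density/limiting step cannot be circumvented, and the $2^n$-point induction must be set up precisely so that one application of continuity suffices to close the gap. Everything around it is routine manipulation of the quantities $\vp$ and $\Phi$ already introduced.
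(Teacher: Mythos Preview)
Your proof is correct and follows the classical route: convexity gives existence of one-sided derivatives (via the paper's Characterization of Convexity) and hence continuity, together with midpoint convexity as a special case of the monotonicity of $\vp(\cdot,x_3)$; conversely, the midpoint inequality is iterated to $2^n$ points, specialised to dyadic convex combinations, closed up by continuity, and then unpacked into the three-slopes inequality $\vp(x_1,x_3)\le\vp(x_2,x_3)$. You are also right to flag that continuity is used at exactly one point, and that a Hamel-basis counterexample shows it cannot be dropped.

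As for the comparison: the paper does not supply a proof of this theorem. Like the Rolle, Mean Value, and Characterization of Convexity theorems in the same introductory section, it is quoted from Artin's monograph \cite{ArtinGamma} and stated without argument; the surrounding text merely asserts the result and moves on to an exercise. So there is no ``paper's own proof'' to compare against. Your argument is in fact essentially the one Artin himself gives, and it would serve perfectly well as the missing proof. One small caveat: the displayed inequality~\eqref{eq:weakConvexity} in the paper is misprinted as $f(x_1+x_2)\le\tfrac12(f(x_1)+f(x_2))$; you have (correctly) read it as the intended midpoint condition $f\bigl(\tfrac{x_1+x_2}{2}\bigr)\le\tfrac12(f(x_1)+f(x_2))$, and your proof depends on that reading.
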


\begin{exercise}
Let $f(x):=-\log(x)$ defined on the positive real numbers $\R_+$.
Show that $f$ is convex.
How is inequality \eqref{eq:weakConvexity} called in this situation? 
\end{exercise}

\begin{definition}[Log-Convexity]
$f:(a,b)\to \R$ is called log-convex%
\footnote{In literature you will also find logarithmically convex or superconvex for what we call here log-convex.}
(weakly log-convex) 
if $f$ is positive and $\log{f}$ is convex (weakly log-convex).
\end{definition}
The positivity assumption on $f$ for log-convexity is, of course,
a formal prerequisite because otherwise the logarithm of $f$ cannot be formed. 
Any logarithmically convex function is convex 
since it is the composite of the increasing convex function $\exp$ and the function $\log{f}$,
but the the converse generally does not hold.
\begin{exercise}
Verify that the function $x \mapsto x^2$ defined on $\R$ is convex, but not log-convex.
On the other hand: given a convex function $g: \R \to \R$, that is not log-convex. 
Can you find some fixed other function $l:\R\to\R$ such that
$l$ composed with $g$, which we call 
\begin{equation*}
h(x):=l(g(x)),
\end{equation*}
is log-convex?\\
\textbf{Hint:} Depending on your background you may call $l$ (if you pick the best/simplest $l$)
the most or the the second most important function in mathematics. 
\end{exercise} 
As we have seen, convex functions form a vector space and so do log-convex functions,
but since the product of log-convex functions is again log-convex, they even form an algebra.

\begin{theorem}
Let $f,g: (a,b) \to \R$ be two log-convex (weakly log convex) functions.
Then their sum $f+g$ 
and their product $f\cdot g$ 
is log-convex (weakly log convex).
The same holds for sequences of log-convex functions, 
if the limit function is positive.
\end{theorem}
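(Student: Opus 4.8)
The plan is to treat the three assertions — product, sum, and limit — separately, disposing of the product and the limit quickly and spending the real work on the sum.

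For the product, the point is the identity $\log(f\cdot g)=\log f+\log g$ (valid because $f,g>0$): since the sum of two convex functions is convex (the earlier exercise), $\log(fg)$ is convex, so $fg$ is log-convex; and if $f,g$ are only weakly log-convex, the same identity together with the trivial fact that a sum of midpoint-convex functions is midpoint-convex (add the two defining inequalities) shows $fg$ is weakly log-convex. For the limit, suppose $f_n\to f$ pointwise with each $f_n$ log-convex and $f>0$; then, since $\log$ is continuous on $\R_+$, $\log f_n\to\log f$ pointwise, and as each $\log f_n$ is convex, the exercise on limits of convex functions gives that $\log f$ is convex, i.e. $f$ is log-convex. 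Midpoint-convexity likewise passes to pointwise limits, which handles the weak case.

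The main step is the sum. First I would record that a log-convex function is automatically continuous, being the composite $\exp\circ(\log f)$ of the continuous function $\exp$ with the convex — hence continuous — function $\log f$ on the open interval $(a,b)$. Consequently $f+g$ is continuous and positive, so by the theorem ``Weakly Convex plus Continuous implies Convex'' it suffices to prove that $\log(f+g)$ is weakly convex, that is, for $x_1,x_2\in(a,b)$ with midpoint $m=\tfrac{x_1+x_2}{2}$,
\[
(f+g)(m)\ \le\ \sqrt{(f+g)(x_1)\,(f+g)(x_2)}.
\]
Weak log-convexity of $f$ and of $g$ gives $f(m)\le\sqrt{f(x_1)f(x_2)}$ and $g(m)\le\sqrt{g(x_1)g(x_2)}$, so adding these it remains to establish
\[
\sqrt{f(x_1)f(x_2)}+\sqrt{g(x_1)g(x_2)}\ \le\ \sqrt{\big(f(x_1)+g(x_1)\big)\big(f(x_2)+g(x_2)\big)}.
\]
This is exactly the Cauchy--Schwarz inequality for the vectors $\big(\sqrt{f(x_1)},\sqrt{g(x_1)}\big)$ and $\big(\sqrt{f(x_2)},\sqrt{g(x_2)}\big)$ in $\R^2$; equivalently, squaring both sides it reduces to $2\sqrt{f(x_1)g(x_2)f(x_2)g(x_1)}\le f(x_1)g(x_2)+f(x_2)g(x_1)$, which is AM--GM. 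The very same computation settles the weakly-log-convex case with no appeal to continuity, since there the displayed midpoint inequality for $f+g$ is itself the desired conclusion.

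I expect the only genuine obstacle to be this sum: seeing that midpoint-convexity of $\log(f+g)$, after inserting the hypotheses on $f$ and $g$, collapses to Cauchy--Schwarz/AM--GM, and noticing that one is allowed to work with the midpoint (weak) formulation only because log-convex functions are continuous, which is what licenses the reduction through the earlier characterization theorem. If one insisted on the full $\lambda$-version $(f+g)(\lambda x_1+(1-\lambda)x_2)\le (f+g)(x_1)^{\lambda}(f+g)(x_2)^{1-\lambda}$ directly, one would replace Cauchy--Schwarz by H\"older's inequality with exponents $1/\lambda$ and $1/(1-\lambda)$; alternatively one may iterate the midpoint inequality to all dyadic $\lambda$ and pass to the limit using continuity, avoiding H\"older altogether.
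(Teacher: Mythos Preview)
The paper states this theorem without proof (it is one of the results quoted from Artin's monograph \cite{ArtinGamma}), so there is no in-paper argument to compare against. Your proof is correct and is in fact essentially Artin's own argument: reduce the sum to the weakly log-convex case via continuity, apply the midpoint hypothesis to $f$ and $g$ separately, and close with the two-term Cauchy--Schwarz/AM--GM inequality; the product and limit cases are handled exactly as you indicate. One small remark: the paper's displayed definition of weak convexity has $f(x_1+x_2)$ where $f\!\left(\tfrac{x_1+x_2}{2}\right)$ is clearly intended, and you have (rightly) used the midpoint version.
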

\subsection{Log-convex Integrals}
The previous results can be combined to the statement on log-convexity of integrals.
Suppose $f:(a,b)\times I \to \R$, $I$ some interval of $\R$, is a continuous function 
of the two variables $x$ and $t$.
Furthermore, for any fixed value of $t$, suppose that $f(t, \cdot)$ is log-convex, 
twice differentiable function of $x$.
For any fixed integer $n$ we can build the function
\begin{equation}
F_n(x)=h \sum_{k=0}^{n-1}{f(a+k h,x)}
\end{equation}
with $h=\frac{b-a}{n}$.
Being the sum of log-convex functions,
$F_n$ is also log-convex. As $n$ approaches infinity,
$F_n$ converges to the integral
\begin{equation}
\int_a^b{f(t,x)\dt},
\end{equation}
which hence is also log-convex.
If $b=\infty$, the result also holds supposed the improper integral converges.
Artin is mainly interested in integral representations of Euler`s Gamma function
and therefore considers integrals of the form
\begin{equation}
\int_a^b{\vp(t)t^{x-1}\dt}
\label{eq:ArtinsLogConvexFunctions}
\end{equation}
with $\vp: (a,b) \to \R$ being a positive and continuous function for $t \in (a,b)$.
Verify that $\frac{\d^2}{\dx^2}{(\vp(t)t^{x-1})}=0$.

Sufficiently smooth log-convex functions are characterized in the next
\begin{theorem}[Characterization of Log-Convexity]
\label{theo:D2LogConvexity}
Let $f:(a,b)\to \R$ be twice differentiable and without zeros.
Then $f$ is log-convex if, and only if,
\begin{equation}
q(f):=
\det\begin{pmatrix}
f & f^\prime\\
f^\prime & f^{\prime \prime}	
\end{pmatrix}=
f f^{\prime \prime}-(f^{\prime})^2 \geq 0
\label{eq:smoothFunctionsLogConvexity}
\end{equation}
for all $x\in(a,b)$
\end{theorem}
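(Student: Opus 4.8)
The plan is to reduce the claim to the twice-differentiable case of the Characterization of Convexity theorem by passing to $g:=\log f$ and computing $g''$ explicitly.

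First I would note that a twice differentiable function is continuous, so since $f$ has no zeros on $(a,b)$ the intermediate value theorem forces $f$ to have constant sign. Log-convexity requires $f$ to be positive, so in the direction ``log-convex $\Rightarrow q(f)\geq 0$'' we may take $f>0$, and in the converse direction we are again deducing log-convexity, which by definition presupposes $f>0$; hence throughout the argument we work under $f>0$. (If $f<0$ one replaces $f$ by $-f$, using that $q(-f)=(-f)(-f'')-(-f')^2=q(f)$, although in that case $f$ itself is not log-convex merely for the formal reason of not being positive.) So assume $f>0$ on $(a,b)$.

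Next, set $g:=\log f$. Since $f$ is positive and twice differentiable and $\log$ is smooth on $\R_+$, the chain and quotient rules give that $g$ is twice differentiable with
\begin{equation}
g'=\frac{f'}{f},\qquad g''=\frac{f''f-(f')^2}{f^2}=\frac{q(f)}{f^2}.
\end{equation}
Here I would carry out the quotient-rule computation for $g''$ and observe that its numerator is exactly the determinant $q(f)$ appearing in \eqref{eq:smoothFunctionsLogConvexity}.

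Then I would invoke the Characterization of Convexity theorem: for the twice differentiable function $g$, convexity is equivalent to $g''\geq 0$ on $(a,b)$. Since $f^2>0$ everywhere, $g''=q(f)/f^2\geq 0$ holds if and only if $q(f)\geq 0$ holds. Unwinding the definition of log-convexity — $f$ is log-convex precisely when $f$ is positive and $g=\log f$ is convex — we conclude that $f$ is log-convex if and only if $q(f)\geq 0$ on $(a,b)$, as claimed. The argument is essentially a one-line computation once the substitution $g=\log f$ is made; the only point requiring care is the sign of $f$ and the role of the ``no zeros'' hypothesis, which guarantees both that we never divide by zero and that ``$f$ positive'' is the only relevant case. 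I do not expect any serious obstacle.
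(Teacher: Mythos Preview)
Your proposal is correct and follows essentially the same route as the paper: set $g=\log f$, compute $g''=(ff''-(f')^2)/f^2$, and read off the equivalence from the sign of the numerator. If anything, your write-up is more complete than the paper's, which only spells out the forward direction and does not discuss the role of the ``no zeros'' hypothesis.
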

\begin{proof}
Let $f:\R\to\R$ be twice differentiable and log-convex.
Thus $f^{\prime \prime}$ exists for all $x\in\R$ and the second derivative of $\log{f}$ is non-negative
\begin{eqnarray*}
\left(\log{f}\right)^{\prime \prime}&=&\left(\frac{f^\prime}{f}\right)^\prime \\
&=&
\frac{f^{\prime \prime}}{f}-\frac{(f^{\prime})^2}{f^2}=\frac{f f^{\prime \prime}-(f^{\prime})^2}{f^2}\\
&=&
\frac{1}{f^2}
\det{
\begin{pmatrix}
f & f^\prime\\
f^\prime & f^{\prime \prime}
\end{pmatrix}} \geq 0
\end{eqnarray*}
and hence
\begin{equation*}
f \geq \frac{(f^\prime)^2}{f^{\prime \prime}}.
\end{equation*}
\end{proof}

\begin{theorem}
Let $\vp:(a,b)\to \R$ be positive continuous function.
Then
\begin{equation}
\int_a^b{\vp(t)t^{x-1}\dt}
\end{equation}
is a log-convex function of $x$ defined where the integral converges.
\end{theorem}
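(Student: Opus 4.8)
The plan is to write $F(x):=\int_a^b\varphi(t)\,t^{x-1}\,dt$ as a pointwise limit of log-convex functions and then invoke the stability of log-convexity under finite sums and under positive limits, exactly as in the subsection on log-convex integrals. The one elementary input is that, for each fixed $t$ in the range of integration (so $t>0$, as is implicitly assumed), the single function $x\mapsto\varphi(t)\,t^{x-1}$ is log-convex in $x$: it is positive, and its logarithm $\log\varphi(t)+(x-1)\log t$ is affine, hence convex, in $x$; equivalently $q\bigl(\varphi(t)\,t^{x-1}\bigr)=0\ge 0$, the degenerate case of Theorem~\ref{theo:D2LogConvexity}.

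Fix next a compact subinterval $[c,d]\subset(a,b)$. There $\varphi$ is continuous, hence bounded, so for every real $x$ the Riemann sums $\frac{d-c}{n}\sum_{k=0}^{n-1}\varphi\bigl(c+k\tfrac{d-c}{n}\bigr)\bigl(c+k\tfrac{d-c}{n}\bigr)^{x-1}$ converge, as $n\to\infty$, to $\int_c^d\varphi(t)\,t^{x-1}\,dt$. Each Riemann sum is a finite sum of log-convex functions of $x$, hence log-convex by the theorem on sums and products of log-convex functions, and its limit is positive, being the integral of a positive continuous function over an interval of positive length; so $x\mapsto\int_c^d\varphi(t)\,t^{x-1}\,dt$ is log-convex for every compact $[c,d]\subset(a,b)$.

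Finally, exhaust $(a,b)$. Let $S\subset\R$ be the set of $x$ for which the (possibly improper) integral converges; $S$ is an interval, since $s\mapsto t^{s-1}$ is monotone and therefore $t^{x-1}\le t^{\alpha-1}+t^{\beta-1}$ for all $t>0$ and all $x\in[\alpha,\beta]$, so convergence at $\alpha$ and at $\beta$ forces convergence throughout $[\alpha,\beta]$. Choose $c_n\downarrow a$ and $d_n\uparrow b$ (with $d_n\to\infty$ if $b=\infty$) and put $g_n(x):=\int_{c_n}^{d_n}\varphi(t)\,t^{x-1}\,dt$, which is log-convex by the previous step; then $g_n(x)\to F(x)$ for every $x\in S$ by the definition of convergence of the integral, and $F>0$ on $S$, so the closure of log-convexity under positive limits makes $F$ log-convex on $S$. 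The computations are routine; the only step demanding genuine care is this passage to the limit, where one must know that $S$ is an interval, so that convexity on $S$ is meaningful, and that the truncations $g_n$ converge to $F$ at \emph{every} point of $S$ and not merely on a subinterval, so that the limit theorem for log-convex functions really applies — both of which follow from the monotonicity of $s\mapsto t^{s-1}$ together with the standing hypothesis that the integral converges.
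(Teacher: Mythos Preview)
Your proposal is correct and follows essentially the same route as the paper: the argument there (given informally in the subsection on log-convex integrals, just before the characterization theorem) likewise notes that $x\mapsto\varphi(t)\,t^{x-1}$ has affine logarithm, writes the integral as a limit of Riemann sums, and passes log-convexity through sums and limits. You add welcome technical care---working first on compact $[c,d]\subset(a,b)$, checking that the convergence set $S$ is an interval, and handling the improper endpoints by exhaustion---but the underlying idea is identical.
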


\begin{theorem}
Let $f:(a,b)\to \R$ be a log-convex function and $c\in \R$, $c\neq 0$.
Then the translated $f_t:(a,b) \to \R$ and the scaled $f_{s}:(a,b) \to \R$ version of $f$ defined by
$f_{t}(x):=f(x+c)$
and $f_{s}(x):=f(cx)$, \resp,
are log-convex.
\end{theorem}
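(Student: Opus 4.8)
The plan is to reduce everything to the elementary fact that the composition of a convex function with an affine map is again convex. Write $g:=\log f$, which is convex on $(a,b)$ by the hypothesis that $f$ is log-convex. For the translate we have $\log f_t(x)=g(x+c)$, and for the scaled version $\log f_s(x)=g(cx)$; in both cases the inner map is affine. Since $f$ is positive, so are $f_t$ and $f_s$ (their values are values of $f$), so the only thing left to check is convexity of $g$ composed with these affine maps. One should of course read the domains correctly: $f_t$ actually lives on $(a-c,b-c)$ and $f_s$ on $(a/c,b/c)$ when $c>0$ (on $(b/c,a/c)$ when $c<0$); I will take this interval bookkeeping for granted and keep writing $(a,b)$ as in the statement. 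Throughout, $\vp_g$ denotes the difference quotient of $g$ in the sense of the introduction.

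First I would handle the translate. Fix $x_3$ in the domain of $f_t$ and compute the difference quotient of $\log f_t$ directly:
\[
\frac{\log f_t(x_1)-\log f_t(x_3)}{x_1-x_3}
=\frac{g(x_1+c)-g(x_3+c)}{(x_1+c)-(x_3+c)}
=\vp_g(x_1+c,\,x_3+c).
\]
Since $g$ is convex, $\vp_g(\cdot,\,x_3+c)$ is monotone increasing, and since $x_1\mapsto x_1+c$ is increasing, the left-hand side is a monotone increasing function of $x_1$. By the definition of convexity this says $\log f_t$ is convex, i.e. $f_t$ is log-convex.

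Then I would treat the scaled version the same way. With $x_3$ fixed,
\[
\frac{\log f_s(x_1)-\log f_s(x_3)}{x_1-x_3}
= c\cdot\frac{g(cx_1)-g(cx_3)}{cx_1-cx_3}
= c\,\vp_g(cx_1,\,cx_3).
\]
If $c>0$ the map $x_1\mapsto cx_1$ is increasing, so $\vp_g(cx_1,\,cx_3)$ is increasing in $x_1$, and multiplying by the positive constant $c$ keeps it increasing. If $c<0$ the map $x_1\mapsto cx_1$ is decreasing, so $\vp_g(cx_1,\,cx_3)$ is decreasing in $x_1$, and multiplying by the negative constant $c$ turns it back into an increasing function. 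Either way $\log f_s$ has monotone increasing difference quotients, hence is convex, so $f_s$ is log-convex.

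There is essentially no hard step here; the only place to be careful is the sign bookkeeping in the scaled case, where the order reversal caused by $c<0$ is undone precisely by the outer factor $c$. For twice-differentiable $f$ one can alternatively invoke Theorem~\ref{theo:D2LogConvexity}: the chain rule gives $q(f_t)(x)=q(f)(x+c)$ and $q(f_s)(x)=c^2\,q(f)(cx)$, both of which are $\ge 0$ wherever $q(f)\ge 0$ — a convenient sanity check, though not needed for the general statement.
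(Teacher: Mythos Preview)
Your argument is correct. You reduce to the standard fact that convexity is preserved under affine reparametrisation, and you carry this out directly in terms of the difference quotient $\vp$ that the paper uses as its \emph{definition} of convexity; the sign bookkeeping in the $c<0$ case is handled cleanly. Your remark about the domains is also apt: as stated, $f_t$ and $f_s$ do not in general map $(a,b)$ to $\R$, and the intervals you wrote down are the right ones.

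There is nothing to compare against here: the paper states this theorem without proof, only rephrasing it afterwards as invariance of the space of log-convex functions under shift and dilation. Your difference-quotient argument is exactly in the spirit of the surrounding exposition, and the chain-rule check via $q(f_t)(x)=q(f)(x+c)$ and $q(f_s)(x)=c^{2}q(f)(cx)$ that you mention at the end is the natural companion to Theorem~\ref{theo:D2LogConvexity}.
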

With more sophisticated words one can express the latter theorem as:
the space of log-convex functions 
is a common invariant subspace 
of the uniform shift and the uniform translation operator.

\section{Euler`s Gamma Function}
By the famous Bohr-Mollerup theorem Euler`s Gamma function can be characterized as the unique solution 
to the following interpolation problem.
\begin{theorem}[Bohr Mollerup]\label{thm:BohrMollerup}
Any function $f:\R\to\R$ that satisfies
\begin{enumerate}
	\item 
	$f(1)=1$,
	\item
	$f(x+1)=x f(x)$ for any  $x>0$,
	\item
	$f$ is log-convex,
\end{enumerate}
equals the Gamma function 
\begin{equation}
\Gamma(x):=\int_{0}^\infty{{\e^{-t} t^{x-1}}\dt}.
\label{eq:GammaFunction}
\end{equation}
\end{theorem}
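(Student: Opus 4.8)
The plan is to split the statement into an easy existence part and the substantive uniqueness part. That $\Gamma$ defined by \eqref{eq:GammaFunction} satisfies the three conditions is quick: $\Gamma(1)=\int_0^\infty \e^{-t}\dt = 1$, integration by parts gives $\Gamma(x+1)=x\Gamma(x)$, and log-convexity is the case $\vp(t)=\e^{-t}$, $(a,b)=(0,\infty)$ of the theorem on log-convexity of $\int_a^b \vp(t)t^{x-1}\dt$ proved above. So the real content is uniqueness: if $f$ satisfies 1.--3.\ then $f=\Gamma$ on $(0,\infty)$ --- which is the only domain the hypotheses genuinely constrain, and the only place $\Gamma$ is defined, so I would tacitly restrict there. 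From 1.\ and 2.\ one gets $f(m)=(m-1)!$ for $m\in\N$ and, iterating 2., $f(x+n)=f(x)\prod_{k=0}^{n-1}(x+k)$ for $x>0$, $n\in\N$. Hence it suffices to determine $f$ on $(0,1]$: the value at $1$ is prescribed, and 2.\ forces $f/\Gamma$ to be $1$-periodic on $(0,\infty)$, so agreement on $(0,1]$ propagates to all positive $x$. So fix $x\in(0,1)$.

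The key step is to use convexity of $g:=\log f$ quantitatively. Fix an integer $n\ge 2$, so that $n-1 < n+x < n+1$ all lie in the domain. By the definition of convexity, difference quotients of $g$ with a fixed second argument are monotone increasing in the first, so $\vp_g(n-1,n)\le \vp_g(n+x,n)\le \vp_g(n+1,n)$, that is,
\begin{equation*}
g(n)-g(n-1) \;\le\; \frac{g(n+x)-g(n)}{x} \;\le\; g(n+1)-g(n).
\end{equation*}
Since $g(m)=\log((m-1)!)$, the outer terms equal $\log(n-1)$ and $\log n$, hence
\begin{equation*}
\log((n-1)!)+x\log(n-1) \;\le\; g(n+x) \;\le\; \log((n-1)!)+x\log n .
\end{equation*}
On the other hand $g(n+x)=\log f(x)+\sum_{k=0}^{n-1}\log(x+k)$ by the functional equation, so
\begin{equation*}
\log((n-1)!)+x\log(n-1)-\sum_{k=0}^{n-1}\log(x+k) \;\le\; \log f(x) \;\le\; \log((n-1)!)+x\log n-\sum_{k=0}^{n-1}\log(x+k).
\end{equation*}
The difference of the two bounds is $x\log\frac{n}{n-1}\to 0$ as $n\to\infty$, so $\log f(x)$ is squeezed to a single value --- the Gauss product $\lim_{n\to\infty}\frac{(n-1)!\,n^x}{x(x+1)\cdots(x+n-1)}$ --- which does not depend on $f$. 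Therefore any two functions satisfying 1.--3.\ coincide on $(0,1]$, hence on $(0,\infty)$; since $\Gamma$ is such a function, $f=\Gamma$.

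The part I expect to be fiddly is not conceptual but bookkeeping: arranging the interpolation points so that the relevant difference quotients of $\log f$ come out as logarithms of integers, and keeping every inequality pointing the right way when translating between $\vp$, $\Phi$, the slopes $g(m)-g(m-1)$, and the final sandwich. A minor loose end is the mismatch between ``$f:\R\to\R$'' in the statement and the fact that only $x>0$ is really involved (and that $\Gamma$ is undefined at $0,-1,-2,\dots$); I would simply phrase the conclusion on $(0,\infty)$.
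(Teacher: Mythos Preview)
Your argument is correct and is essentially Artin's proof, which is exactly the approach the paper takes: the paper does not prove Theorem~\ref{thm:BohrMollerup} separately but instead proves the generalized ``Bohr Mollerup Type Representation'' theorem, whose proof specializes with $g(x)=x$ to precisely your sandwich argument (compare your inequality $g(n)-g(n-1)\le \frac{g(n+x)-g(n)}{x}\le g(n+1)-g(n)$ with the paper's \eqref{eq:monotoneIncreaseLogConvex}). The only cosmetic difference is in the extension step: the paper verifies directly that the product formula persists beyond $(0,1]$ by checking $f_n(x+1)=g(x)f_n(x)\cdot\frac{g(n)}{g(x+n+1)}$, whereas you argue more cleanly via the $1$-periodicity of $f/\Gamma$ --- both are fine.
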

Because of the functional Equation in Theorem \ref{thm:BohrMollerup} (second condition) the Gamma function is well-known
as an analytic continuation (from $\N$ to $\R$) of the sequence of factorials $n\mapsto n!=n \cdot (n-1)!$ with $1!:=1$.
\begin{exercise}
Verify by partial integration that
$\Gamma(n)=(n-1)!$
for any natural number $n\geq 2$.
\end{exercise}
Surprising is that log-convexity is the property that characterizes the Gamma function uniquely up to some normalization 
(first condition in Theorem \ref{thm:BohrMollerup}) as the only function that agrees at the natural number $n$ with $(n-1)!$.
\begin{exercise}
Verify that the Gamma function as defined in \eqref{eq:GammaFunction} satisfies the three properties from theorem \ref{thm:BohrMollerup}. 
\end{exercise}
\section{Artin Type Functional Equations}
We take Theorem \ref{thm:BohrMollerup} as a starting point
to constructively solve  multiplicative Functional Equation of the form
\begin{equation}
f(x+1)=g(x) f(x)
\label{eq:FG}
\end{equation}
with $g: \R \to \R$ some continuous function.
Many interesting functions satisfy a Artin functional equation \eqref{eq:FG} for some $g:\R\to\R$.
For instance, if $g(x)=1$ for all real numbers $x$, solutions $f$ to \eqref{eq:FG} are $1$-periodic functions.
If $g$ is the identity function on $\R$, 
then $f$ interpolates the factorials since $f(x+1)=x f(x)$.
Under additional assumptions (for instance, if $f$ is log-convex and $f(1)=1$, see Bohr Mollerup theorem \ref{thm:BohrMollerup})
we have $f=\Gamma$. 

\begin{definition}
Let $g:\R\to\R$ any function and $a>0$ and $f:(0,a)\to\R$ a solution to the functional equation \eqref{eq:FG}.
Then we call $f$ Artin function with representer $g$.
A log-convex Artin function is called Bohr Mollerup function or function of Bohr Mollerup type.
\end{definition}

\begin{exercise}
Find necessary and sufficient conditions on the representer $g$ of an Artin function $f$ such that
$f$ is of Bohr Mollerup type.
\end{exercise}
Of course, the notion of Artin functions is merely a tautolygy 
because every function $f:\R\to\R$ can be thought as an Artin function with representer $g(x):=\frac{f(x+1)}{f(x)}$.
We want to explore
whether there is some kind of analogue to the Bohr Mollerup theorem for functional equations of the form \eqref{eq:FG} 
when $g$ is not the identity function.
Therefore, we have to investigate when solutions to \eqref{eq:FG} are log-convex and satisfy some 
interpolation property $f(n)=a_n$ for some positive%
\footnote{The sequence $(a_n)_{n\in\N}$ has to be positive because otherwise $\log{f}$ cannot be formed
and there is no hope for log-convexity of $f$.}
sequence of real numbers $(a_n)_{n\in\N}$.
In the case of the Gamma function we have $g(x)=x$ on $\R$ and $f(n)=a_n:=(n-1)!$.

\begin{theorem}[Bohr Mollerup Type Representation]
Let $g: \R \to \R$ be a positive continuous function and $f:\R \to \R$ a solution to functional equation \eqref{eq:FG},
which satisfies
\begin{eqnarray}
f(1)=g(\infty):=1\label{eq:bmNormalization}\\ 
f(n)=\prod_{k=1}^{n-1}{g(k)} \label{eq:bmInterpolation} \\
f \text{ is log-convex.} \label{eq:bmLogConvexity}
\end{eqnarray}
Then $f:\R \to \R$ with 
\begin{equation}
f(x)= \lim_{n\to \infty}{{g^x{(n)} \prod_{k=0}^n{\frac{g(k)}{g(x+k)}}}}
\label{eq:BohrMollerupTypeRepresentation}
\end{equation}
is the only solution to the Bohr Mollerup functional equation \eqref{eq:FG}.
\end{theorem}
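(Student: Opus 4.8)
\emph{Proof idea.} The plan is to transplant Artin's classical proof of the Bohr--Mollerup theorem into this slightly more general setting. Since $g$ is positive, the functional equation \eqref{eq:FG} can be solved upwards, $f(x+1)=g(x)f(x)$, and downwards, $f(x)=f(x+1)/g(x)$, so it suffices to show that the three conditions \eqref{eq:bmNormalization}--\eqref{eq:bmLogConvexity} force a single possible value of $f$ on the interval $(0,1]$; the representation \eqref{eq:BohrMollerupTypeRepresentation} then drops out of that computation. Put $\psi:=\log f$, which is convex by \eqref{eq:bmLogConvexity}, and record that $\psi(m+1)-\psi(m)=\log g(m)$ for every real $m$, directly from \eqref{eq:FG}. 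Fix $x\in(0,1)$; the value $x=1$ is handled by \eqref{eq:bmNormalization} and $x\le 0$ by downward iteration.

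The heart of the argument is a convexity sandwich. Comparing the secant slopes of the convex function $\psi$ over the intervals $[n-1,n]$, $[n,n+x]$ and $[n,n+1]$, for an integer $n\ge 2$, yields
\[
x\,\log g(n-1)\ \le\ \psi(n+x)-\psi(n)\ \le\ x\,\log g(n).
\]
Next I would unwind the two inner terms using the functional equation and the interpolation hypothesis: iterating \eqref{eq:FG} gives $\psi(n+x)=\psi(x)+\sum_{k=0}^{n-1}\log g(x+k)$, while \eqref{eq:bmInterpolation} gives $\psi(n)=\sum_{k=1}^{n-1}\log g(k)$. Substituting and solving for $\psi(x)$ produces an upper bound $U_n$ and a lower bound $L_n$ with $U_n-L_n=x\log\bigl(g(n)/g(n-1)\bigr)$.

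Now pass to the limit $n\to\infty$. Because $g$ is continuous with $g(\infty)=1$ — really all that is needed is $g(n)/g(n-1)\to 1$ — the gap $U_n-L_n$ tends to $0$, and the squeeze theorem pins $\psi(x)$ down as the common limit of $U_n$ and $L_n$. Exponentiating that limit turns it into the Gauss-type product on the right of \eqref{eq:BohrMollerupTypeRepresentation}, so any function meeting \eqref{eq:bmNormalization}--\eqref{eq:bmLogConvexity} is unique and equals \eqref{eq:BohrMollerupTypeRepresentation}.

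I expect the obstacles to be clerical rather than conceptual: aligning the index ranges of the three products in \eqref{eq:BohrMollerupTypeRepresentation} with the partial sums produced above, and clearing the boundary cases $x\in\{0,1\}$ and $x\le 0$. The one hypothesis-critical point is the vanishing of $x\log\bigl(g(n)/g(n-1)\bigr)$, which is exactly where the normalization $g(\infty)=1$ enters — without it $U_n$ and $L_n$ need not converge to the same value. Finally, it is worth stressing that the theorem is a uniqueness statement: whether the right-hand side of \eqref{eq:BohrMollerupTypeRepresentation} actually converges and genuinely satisfies the three conditions is a separate question about $g$, namely the one raised in the earlier exercise on when an Artin function is of Bohr--Mollerup type.
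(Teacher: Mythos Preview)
Your plan is correct and mirrors the paper's proof almost verbatim: the same convexity sandwich on $[n-1,n]$, $[n,n+x]$, $[n,n+1]$, the same unwinding via the iterated functional equation and the interpolation values $f(n)=\prod_{k=1}^{n-1}g(k)$, and the same squeeze in the limit. The only cosmetic deviation is that the paper closes the gap with the ratio $g(x+n)/g(n)\to 1$ (after an index shift $n\mapsto n+1$ on the lower bound, which is exactly the ``aligning the index ranges'' step you anticipate), whereas you phrase it as $g(n)/g(n-1)\to 1$.
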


\begin{proof}

\begin{itemize}
	\item[\textbf{Step 1}] 
	Find a product formula for the solution $f:(0,1]\to \R$ to the Bohr Mollerup functional equation \eqref{eq:FG} by
	using the log-convexity \eqref{eq:bmLogConvexity} and the interpolation property \eqref{eq:bmInterpolation}.
	\item[\textbf{Step 2}]
	Extend this solution to $\R$ by iterated application of \eqref{eq:FG}.
	\item[\textbf{Step 3}]
	Show that the product representation obtained in step $1$ holds on $\R$. 
	\end{itemize}
\textbf{Step 2:}
Suppose a log-convex solution $f:(0,1]\to \R$ to \eqref{eq:FG} is known. 
Then $f$ can be extended to the interval $(0,n]$ by iterated application of the functional equation \eqref{eq:FG}:
\begin{eqnarray*}
f(x+n)&=&g(x+n-1) f(x+n-1)=g(x+n-1) g(x+n-2) f(x+n-2)\\
&=& g(x+n-1) g(x+n-2) g(x+n-3) f(x+n-3)=\ldots\\
&=& g(x+n-1) g(x+n-2) g(x+n-3) \cdot\ldots\cdot g(x+2)g(x+1)g(x)f(x)\\
&=& \prod_{k=0}^{n-1}{g(x+k)} f(x)
\end{eqnarray*}
Now we want to extend $f$ to include negative real numbers.
Therefore, we solve the iterated functional equation $f(x+n)=\prod_{k=0}^{n-1}{g(x+k)} f(x)$
for $f(x)$ and take the expression obtained as a definition of $f$ for negative real numbers:
if $x$ lies in the interval $(-n,-n+1)$ for some $n\in\N$, we define the value of $f$ to be
\begin{equation}
f(x)=f(x+n)\frac{1}{\prod_{k=0}^{n-1}{g(x+k)}}
\label{eq:gGammaNegative}
\end{equation}
\textbf{Step 1:}
Now, let`s use the log-convexity of $f$ to find the exact value $f(x)$ for $0<x\leq 1$.
Let $n\geq2$. Then
\begin{equation}
\frac{\log{f(n-1)}-\log{f(n)}}{(n-1)-n} \leq \frac{\log{f(x+n)-\log{f(n)}}}{(x+n)-n}\leq \frac{\log{f(1+n)}-\log{f(n)}}{(n+1)-n}
\label{eq:monotoneIncreaseLogConvex}
\end{equation}
expresses the monotone increase of the difference quotients of $\log{f}$.
If we use the interpolation property $f(n)=\prod_{k=1}^{n-1}{g(k)}$,
equation \eqref{eq:monotoneIncreaseLogConvex} reads
\begin{equation}
\log{g(n-1)}\leq \frac{\log{f(x+n)}-\log{f(n)}}{x} \leq \log{g(n)}
\end{equation}
and hence
\begin{equation}
g^x{(n-1)} \leq \frac{f(x+n)}{f(n)} \leq g^x{(n)}.
\end{equation}
Multiplying by $f(n)=\prod_{k=1}^{n-1}{g(k)}$  and using the iterated functional equation $f(x+n)=f(x) \prod_{k=0}^{n-1}{g(x+k)}$
we obtain 
\begin{equation}
g^x{(n-1)} \leq f(x) \frac{\prod_{k=0}^{n-1}{g(x+k)}}{\prod_{k=1}^{n-1}g(k)} \leq g^x{(n)}.
\end{equation}
Setting $p_n(x):=\prod_{k=0}^{n-1}{\frac{g(k)}{g(x+k)}}$ with $g(0):=1$ the latter equation reads
\begin{equation}
p_n(x) \cdot g^x{(n-1)} \leq f(x) \leq p_n(x) \cdot g^x{(n)}.
\end{equation}
Since this holds for all $n \geq 2$,
we can replace $n$ by $n+1$ on the left side.
Thus
\begin{equation}
p_{n+1}(x) \cdot g^x{(n)} \leq f(x) \leq p_n(x) \cdot g^x{(n)}.
\label{eq:sandwitchGGamma}
\end{equation}
Now observe that $f$ is bounded from above and from below by the same function $p_{n+1}(x) \cdot g^x{(n)}$ 
up to the factor $\frac{g(x+n)}{g(n)}$
because $p_n(x)  = p_{n+1}(x)  \frac{g(x+n)}{g(n)}$.
Hence \eqref{eq:sandwitchGGamma} reads
\begin{equation}
p_{n+1}(x) \cdot g^x{(n)} \leq f(x) \leq p_{n+1}(x)  \cdot \frac{g(x+n)}{g(n)} \cdot g^x{(n)}.
\label{eq:eq:sandwitchGGamma2}
\end{equation}
Multiplying through the second part of \eqref{eq:eq:sandwitchGGamma2} by $\frac{g(n)}{g(x+n)}$ 
and combining it with the first part of \eqref{eq:eq:sandwitchGGamma2} gives
\begin{equation}
f(x) \frac{g(n)}{g(x+n)} \leq p_{n+1}(x)   \cdot g^x{(n)} \leq f(x).
\label{eq:eq:sandwitchGGamma3}
\end{equation}
Assuming
$\lim_{n \to \infty}{\frac{g(n)}{g(x+n)}}=1$ 
gives the desired product representation of $f$:
\begin{equation}
f(x) = \lim_{n\to \infty}{{g^x{(n)} \prod_{k=0}^n{\frac{g(k)}{g(x+k)}}}}
\label{eq:BohrMollerupProductRepresentaion}
\end{equation}
on $(0,1]$.\\
\textbf{Step 3:}
To see that representation \eqref{eq:BohrMollerupProductRepresentaion} holds even for all$ x \in \R$,
define 
\begin{equation*}
f_n(x):={{g^x{(n)} \prod_{k=0}^n{\frac{g(k)}{g(x+k)}}}}
\end{equation*}
to be the expression in \eqref{eq:BohrMollerupProductRepresentaion} under the limit sign.
Then we have
\begin{equation}
f(x+1)=g(x) f_n(x) \cdot \frac{g(n)}{g(x+n+1)}
\end{equation}
ans thus
\begin{equation*}
f_n(x)=f(x+1) \frac{g(x+n+1)}{g(x)g(n)}.
\end{equation*}
We see: if the limit in \eqref{eq:BohrMollerupProductRepresentaion} exists for $x$,
it also exists for $x+1$ and vice versa.
Hence the product representation \eqref{eq:BohrMollerupProductRepresentaion} is valid for all $x\in\R$. 
\end{proof}

Following the lines of Artin`s paper \cite{ArtinGamma}, we proceed by deriving expressions for $\log{f}$
with $f$ being a log-convex solution to the Bohr Mollerup Type functional equation \eqref{eq:FG}.
Assuming continuity of $f$ in \eqref{eq:BohrMollerupProductRepresentaion}, we obtain
\begin{equation}
\log{f(x)}=\lim_{n\to\infty}{\left(x\log{g(n)}+\sum_{k=0}^{n}{(\log{g(k)-\log{g(x+k)}})}\right)}
\label{eq:LogBohrMollerup}
\end{equation}
Now we would like to differentiate twice under the limit sign to obtain conditions on $g$ that guarantee the log-convexity of $f$.
If the convergence in \eqref{eq:LogBohrMollerup} is uniformly, we have
\begin{equation}
\left(\log{f(x)}\right)^{\prime \prime}={\sum_{k=0}^{\infty}{\left(\frac{(g^{\prime}(x+k))^2}{g^2(x+k)}
-\frac{g^{\prime \prime}(x+k)}{g(x+k)}
\right)}} \geq 0
\label{eq:LogConvexArtinFunction}
\end{equation}

\begin{remark}
For the Gamma function we have $g(x)=x$ and condition \eqref{eq:LogConvexArtinFunction} reads
\begin{equation}
\sum_{k=0}^{\infty}{\frac{1}{(x+k)^2}} \geq 0.
\end{equation}
\end{remark}

We can use \eqref{eq:LogConvexArtinFunction} to define an inner product.
\begin{definition}
Let $g:\R\to\R$ be twice differentiable.
Then
\begin{equation*}
a(f(x),g(x)):=
{\sum_{k=0}^{\infty}
{\det\begin{pmatrix}
(f^\prime (x+k))^2 & g^{\prime \prime}(x+k)\\
g^{-2}(x+k) & f^{-2}(x+k)   	
\end{pmatrix}}}
\end{equation*}
\end{definition}
Then log-convex solutions to the Artins functional equation \eqref{eq:FG} can be interpreted 
as functions that make the quadratic form 
$q(g):=a(g,g)$ positive.

\begin{example}
Setting $g=\id$ you obtain the product representation of Euler`s Gamma function.
Because $g(x)=0$ if, and only if, $x=0$, the function $g=\id$ does not satisfy the positivity assumption,
that we made. 
\end{example}

\begin{exercise}[Artin functions with Artinian derivative]
The derivative $f^\prime$ of an Artin function $f$ is called Artinian if $f^\prime$ again is an Artin function,
\ie from $f$ satisfying $f(x+1)=g_0(x)f(x)$ for some analytic function $g_0$ follows
that there is an analytic function $g_1$ such that $f^\prime(x+1)=g_1(x) f^\prime (x)$.
Show that the $n$-th derivative of an Artin function $f$ with representer $g_0$ is Artinian if  
\begin{equation}
g_n (x) = \frac{\left(g_{n-1}f^{(n-1)}\right)^\prime}{f^{(n)}}
\end{equation}
\end{exercise}

\begin{example}
Let us consider a class of Artin functions with representer
$g(x)=x^c$ with $c$ some complex number.
When are these Artin functions $f:=f_g$ satisfying $f(x+1)=g(x)f(x)$ of Bohr Mollerup type?
Condition \eqref{eq:LogConvexArtinFunction} reads
\begin{eqnarray*}
(\log{f})^{\prime \prime}
&=&
\sum_{k=0}^{\infty}{\frac{c(x+k)^{2(c-1)}}{(x+k)^{2c}} - \frac{c(c-1)(x+k)^{c-2}}{(x+k)^{c}}}\\
&=&
c(2-c)\sum_{k=0}^{\infty}{(x+k)^{-1}} \geq 0.
\end{eqnarray*}
Hence the Artin function $f$ is of Bohr Mollerup Type,
if $\Re{(c(2-c))}\geq 0$.
Consequently, $f$ is log-convex for all real numbers $c$.
\end{example}

\section{Making functions log-convex}
Assume $f:\R\to\R$ is a twice differentiable, but not log-convex function.
How can we modify $f$ such that the modified version of $f$ is log-convex?
\subsection{Inner multiplication problem}
Let $f:\R\to\R$ be a twice differentiable, but not log-convex function.
In the inner multiplication problem we seek to find a twice differentiable function $m:\R\to\R$ such that
$mf$ is log-convex. Any function such that $mf$ is log-convex is called inner multiplicator of $f$.
According to theorem \ref{theo:D2LogConvexity} $mf$ is log-convex 
\Iff second derivative of $\log{(mf)}$ is non-negative. 
This is characterized in the next
\begin{theorem}
Let $f:\R\to\R$ be a differentiable, but not log-convex function.
Then, there is an inner multiplicator $m: \R \to \R$ of $f$ \Iff
\begin{equation}
mf \cdot (mf)^{\prime \prime} \geq ({(mf)}^{\prime})^2.
\label{eq:innermultiplierLogconvex}
\end{equation}
\end{theorem}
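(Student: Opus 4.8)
The statement to be proved is essentially a tautological restatement of Theorem~\ref{theo:D2LogConvexity} applied to the product $mf$: an inner multiplicator exists if and only if \emph{some} twice differentiable $m$ makes the displayed determinant-type inequality \eqref{eq:innermultiplierLogconvex} hold. Since the ``if'' direction is immediate (if \eqref{eq:innermultiplierLogconvex} holds for a given $m$, then by Theorem~\ref{theo:D2LogConvexity} the function $mf$ — which has no zeros, provided we also require $m$ nonvanishing — is log-convex, so $m$ is by definition an inner multiplicator), the content is the ``only if'' direction, which is again just the forward implication of Theorem~\ref{theo:D2LogConvexity}. So the plan is: first note that $mf$ is twice differentiable as a product of twice differentiable functions; second, observe that log-convexity of $mf$ requires $mf$ to be positive, hence zero-free; third, invoke Theorem~\ref{theo:D2LogConvexity} with $mf$ in place of $f$ to get $q(mf) = (mf)(mf)^{\prime\prime} - ((mf)^\prime)^2 \geq 0$, which is exactly \eqref{eq:innermultiplierLogconvex}.

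The one genuinely useful thing to do beyond quoting the theorem is to expand \eqref{eq:innermultiplierLogconvex} in terms of $m$ and $f$ and their derivatives, turning the abstract criterion into a differential inequality for the unknown $m$. Using $(mf)' = m'f + mf'$ and $(mf)'' = m''f + 2m'f' + mf''$, I would compute
\begin{equation*}
q(mf) = m^2 q(f) + f^2 q(m) + mf\bigl(m''f + 2m'f' + mf'' \bigr) - \bigl(\text{cross terms}\bigr),
\end{equation*}
and after cancellation present the clean form
\begin{equation*}
q(mf) = m^2\bigl(ff'' - (f')^2\bigr) + f^2\bigl(mm'' - (m')^2\bigr) + mf\bigl(mf'' - 2m'f' + m''f\bigr)\cdot 0 + \ldots
\end{equation*}
— here I would actually carry the bookkeeping carefully so that the final identity reads $q(mf) = m^2 q(f) + f^2 q(m) + \text{(a manifestly symmetric bilinear remainder in }m,f)$, the point being that $q$ is not additive and the coupling term is what one must control. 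This rewriting makes transparent why a multiplicator can exist even when $q(f) < 0$: one chooses $m$ strongly log-convex ($q(m)$ large and positive) so that the sum is nonnegative.

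The main obstacle is not the logic — which is a one-line appeal to Theorem~\ref{theo:D2LogConvexity} — but the algebra of the expansion, and more importantly pinning down the hypotheses precisely: the theorem as stated says ``differentiable'' but uses $(mf)''$, so implicitly $f$ and $m$ must be twice differentiable; and ``inner multiplicator'' should carry the standing assumption that $m$ is positive (or at least zero-free with $mf > 0$), since otherwise $\log(mf)$ is undefined and Theorem~\ref{theo:D2LogConvexity} does not apply. I would state these assumptions explicitly at the start of the proof. Beyond that, I would close by remarking that \eqref{eq:innermultiplierLogconvex}, rewritten as $q(m)f^2 + (\text{coupling}) \geq -m^2 q(f)$, is a second-order nonlinear ODE-inequality in $m$, whose solvability is the substantive question taken up in the sequel; the present theorem merely records the criterion, and no deeper argument is needed here.
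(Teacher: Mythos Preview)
Your proposal is correct and takes the same approach as the paper: the paper offers no separate proof environment for this theorem, but simply prefaces it with the sentence ``According to theorem~\ref{theo:D2LogConvexity} $mf$ is log-convex \Iff second derivative of $\log{(mf)}$ is non-negative,'' which is exactly your one-line appeal to Theorem~\ref{theo:D2LogConvexity} with $mf$ in place of $f$. Your additional remarks --- the hypothesis cleanup (twice differentiable, $mf$ positive) and the expansion of $q(mf)$ into terms involving $q(f)$, $q(m)$, and a coupling piece --- go beyond what the paper provides here and are a welcome elaboration, though you should either complete the algebra cleanly or omit the partial displays, since the current half-finished formulas with ``$\ldots$'' and ``$\cdot\,0$'' would not belong in a final write-up.
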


\subsection{Outer multiplication problem}
Let $f:\R\to\R$ be a twice differentiable, but not log-convex function.
The outer multiplication problem consists of finding a twice differentiable function $m:\R\to\R$ such that
$m\log{f}=\log{(f^m)}$ is convex.
We use theorem \ref{theo:D2LogConvexity} to characterize convexity of $\log{(f^m)}$
and obtain 
\begin{theorem}
Assume $f:\R\to\R$ is twice differentiable, but not log-convex.
Then an outer multiplier of $f$ satisfies
\begin{equation}
(m\log{f})^{\prime \prime}=
m^{\prime \prime} \log{f}+ 2 m^\prime \frac{f^\prime}{f}+
m \underbrace{\left(\frac{f^{\prime \prime}}{f}-{\left(\frac{f^{\prime}}{f}\right)}^2\right)}_{(\log{f})^{\prime \prime}\ngeq 0}\geq 0.
\end{equation}
\end{theorem}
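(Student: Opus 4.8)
The plan is to read off the statement directly from the definition: by construction, $m$ is an outer multiplier of $f$ precisely when $m\log f = \log(f^m)$ is convex, and since $f$ is twice differentiable and (tacitly, as for any log-convex question) positive with $\log f$ finite on $\R$, while $m$ is taken twice differentiable, the product $m\log f$ is itself twice differentiable. Hence the twice-differentiable case of the Characterization of Convexity theorem applies: $m\log f$ is convex if, and only if, $(m\log f)^{\prime\prime}\geq 0$. So the entire content of the theorem reduces to computing $(m\log f)^{\prime\prime}$ and recording the resulting inequality.

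First I would apply the Leibniz product rule once to obtain $(m\log f)^{\prime}=m^{\prime}\log f+m\,(\log f)^{\prime}$ and substitute $(\log f)^{\prime}=f^{\prime}/f$. Differentiating a second time, again by the product rule, yields $m^{\prime\prime}\log f+m^{\prime}(f^{\prime}/f)+m^{\prime}(f^{\prime}/f)+m\,(f^{\prime}/f)^{\prime}$, and the two middle terms collapse to $2m^{\prime}(f^{\prime}/f)$. Finally I would rewrite $(f^{\prime}/f)^{\prime}=f^{\prime\prime}/f-(f^{\prime}/f)^2$, which is exactly $(\log f)^{\prime\prime}=(ff^{\prime\prime}-(f^{\prime})^2)/f^2$, i.e. the quantity $q(f)/f^2$ from Theorem~\ref{theo:D2LogConvexity}; this term is $\ngeq 0$ here precisely because $f$ fails to be log-convex. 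Assembling the three pieces gives the displayed formula, and by the convexity characterization the requirement that it be $\geq 0$ is equivalent to $m$ being an outer multiplier, so the identity and the characterization come out together.

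I do not expect a genuine obstacle: the argument is a two-step application of the product rule followed by a citation of an already-established theorem. The only points worth a sentence of care are (i) recording that $f>0$ and $\log f$ is finite on all of $\R$ so that every expression is meaningful and $m\log f$ inherits twice differentiability, and (ii) stressing that the theorem is a \emph{characterizing constraint} on $m$ rather than a construction — the bracketed term $(\log f)^{\prime\prime}$ being negative forces the remaining part $m^{\prime\prime}\log f+2m^{\prime}(f^{\prime}/f)$ to compensate, which is exactly the tension the subsequent discussion of outer multipliers has to resolve.
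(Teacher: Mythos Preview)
Your proposal is correct and follows exactly the route the paper takes: the paper's ``proof'' is only the single sentence preceding the theorem, namely ``We use theorem~\ref{theo:D2LogConvexity} to characterize convexity of $\log(f^m)$ and obtain\ldots'', and your write-up simply spells out the two applications of the product rule and the identification of $(\log f)''$ that this sentence leaves implicit. The only cosmetic difference is that you cite the Characterization of Convexity theorem (second derivative $\geq 0$) directly for $m\log f$, whereas the paper phrases it via Theorem~\ref{theo:D2LogConvexity} applied to $f^m$; since convexity of $m\log f=\log(f^m)$ is precisely log-convexity of $f^m$, these are the same criterion and your version is arguably the cleaner citation.
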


\begin{theorem}[Curvature of Artin functions]
Let $f$ be a twice differentiable Artin function with representer $g$:
\begin{equation*}
f(x+1)=g(x) f(x).
\end{equation*}
Then the curvature of $f$ is given by
\begin{equation*}
\kappa_f=\frac{g^{\prime \prime}f+2g^\prime f^\prime + g f^{\prime\prime}}{\left(1+(g^\prime f+g f^\prime)^2\right)^{\frac32}}.
\end{equation*}
\end{theorem}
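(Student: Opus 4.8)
The plan is to start from the functional equation $f(x+1)=g(x)f(x)$ and differentiate it twice to express $f^{\prime\prime}(x+1)$ in terms of $g$ and the derivatives of $f$. Differentiating once gives $f^{\prime}(x+1)=g^{\prime}(x)f(x)+g(x)f^{\prime}(x)$, and differentiating again gives $f^{\prime\prime}(x+1)=g^{\prime\prime}(x)f(x)+2g^{\prime}(x)f^{\prime}(x)+g(x)f^{\prime\prime}(x)$. So the numerator $g^{\prime\prime}f+2g^{\prime}f^{\prime}+gf^{\prime\prime}$ appearing in the claimed formula is precisely $f^{\prime\prime}(x+1)$, and the quantity $g^{\prime}f+gf^{\prime}$ in the denominator is precisely $f^{\prime}(x+1)$. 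Thus the assertion is really just the standard planar-curve curvature formula
\begin{equation*}
\kappa_f(x+1)=\frac{f^{\prime\prime}(x+1)}{\bigl(1+f^{\prime}(x+1)^2\bigr)^{3/2}}
\end{equation*}
rewritten using the functional equation, evaluated at the shifted point.

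First I would recall the definition of curvature of the graph of a twice differentiable function $y=f(x)$, namely $\kappa_f=f^{\prime\prime}/(1+(f^{\prime})^2)^{3/2}$, which follows from parametrizing the graph as $t\mapsto(t,f(t))$ and computing $\kappa=\lvert x^{\prime}y^{\prime\prime}-x^{\prime\prime}y^{\prime}\rvert/(x^{\prime 2}+y^{\prime 2})^{3/2}$ (or its signed version without the absolute value). Then I would substitute the two differentiated forms of the functional equation into this formula at the argument $x+1$, and simply read off that the numerator and denominator match the claimed expression. No estimates or limiting arguments are needed; it is a direct substitution.

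The only genuine subtlety — and the one place where the statement as written needs a word of care — is the bookkeeping of the argument: the left-hand side $\kappa_f$ in the displayed formula is to be interpreted as the curvature of $f$ at the point $x+1$, while $g$ and the derivatives of $f$ on the right-hand side are all evaluated at $x$. I would make this explicit at the outset so that the identity is unambiguous, and then the proof is a one-line verification. The main ``obstacle'', such as it is, is thus purely expository: ensuring the reader understands which point each side refers to, after which the chain rule applied twice to $f(x+1)=g(x)f(x)$ finishes everything.
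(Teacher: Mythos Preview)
Your argument is correct and is exactly the intended one: the paper states this theorem without proof, and the direct computation you describe---differentiating the functional equation twice and inserting the results into the standard graph-curvature formula $\kappa_f=f^{\prime\prime}/(1+(f^{\prime})^{2})^{3/2}$---is the only natural route. Your remark that the left-hand side must be read as $\kappa_f(x+1)$ while the right-hand side is evaluated at $x$ is a genuine clarification the paper's statement leaves implicit.
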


\section{Fibonacci function}
The goal of this section is to find a real-valued log-convex function that interpolates the Fibonacci numbers,
which are given by the following linear second order recursion equation
\begin{equation}
a_n=a_{n-1}+a_{n-2}
\label{eq:FibonacciRecursion}
\end{equation}
with initial values $a_{0}=a_{1}=1$.
The first Fibonacci numbers are $1,1,2,3,5,7,11,18$.
For any linear recursion of depth two we can find a closed formula by making the ansatz $a_{n}=\lambda^n$.
Here this ansatz leads to the quadratic equation $\lambda^2-\lambda+1=0$
with solution $\lambda_{1,2}=\frac{1}{2}(1\pm \sqrt{5})$.
Hence the $n$-th Fibonacci number can be calculated directly by
\begin{equation}
a_n=\frac{\vp^n-(-1)^n\vp^{-n}}{\sqrt{5}}
\label{eq:closedFormFibonacci}
\end{equation}
with $\vp=\frac{1+\sqrt{5}}{2}$ the golden ratio.
If we replace $n$ in \eqref{eq:closedFormFibonacci} by some real number $x$,
we get an extension of the Fibonacci numbers 
\begin{equation}
F(x)=\frac{\vp^x-(-1)^x\vp^{-x}}{\sqrt{5}}.
\label{eq:complexFibonacciNumber}
\end{equation}
Since $(-1)^x=\e^{\ii \pi x}=\cos{(\pi x)}+ \ii \sin{(\pi x)}$, 
this extension is not always real-valued,
more precisely $F(x)$ is a real number \Iff $x$ is an integer.
We are interested in constructing a real-valued extension of the Fibonacci numbers.
Therefore, we consider the real part of $F(x)$
\begin{equation}
f(x):=\Re{F(x)}=\frac{\vp^x-\cos{(\pi x)}\vp^{-x}}{\sqrt{5}}.
\end{equation}
Let us take a closer look on the behavior of $f(x)$ at the integers.
Since $\cos{\pi n}=(-1)^n$ for $n \in \Z$, we have 
\begin{equation}
f(n)=
\begin{cases}
\frac{\sqrt{5}}{2} \sinh{(n \ln{\vp})}, \quad n \text{ even}\\
\frac{\sqrt{5}}{2} \cosh{(n \ln{\vp})}, \quad n \text{ odd}
\end{cases}
\label{eq:FibonacciExtensionIntegers}
\end{equation}
Since the Fibonacci numbers form an increasing sequence of natural numbers, 
the real part of its canonical interpolation $f$ is an Artin function 
with representer
\begin{equation}
g(x):=\frac{f(x+1)}{f(x)}=
\frac{\vp^{x+1}-\cos{(\pi x)} \vp^{-(x+1)}}{\vp^x-\cos{(\pi x)} \vp^{-x}}.
\label{eq:representerFibonacci}
\end{equation}
Is $f:\R\to\R$, the real part of the canonical extension of the Fibonacci numbers to $\R$, log-convex?
No. Due to the oscillating term $\cos{(\pi x)}$ the second derivative of $f$ changes sign four times in the intervall $[0,4]$.

\bibliography{bibliography/myrefs}
\bibliographystyle{plain}
 
  \end{document}